\numberwithin{equation}{section}
\newtheorem{theorem}{Theorem}[section]
\newtheorem*{theoremm}{Theorem}
\newtheorem{lemma}{Lemma}[section]
\newtheorem{corollary}{Corollary}[section]
\newtheorem*{corollaryy}{Corollary}
\newtheorem{proposition}{Proposition}[section]
\newtheorem{definition}{Definition}[section]
\def\cal{\mathcal}
\let\Re=\undefined
\DeclareMathOperator{\Re}{Re}
\let\Im=\undefined
\DeclareMathOperator{\Im}{Im}
\newcommand{\nn}{\nonumber}
\newcommand{\nt}{\noindent}
\newcommand{\bsl}{\backslash}
\newcommand{\pt}{\partial}
\newcommand{\ti}{\tilde}
\newcommand{\lt}{\left}
\newcommand{\rt}{\right}
\newcommand{\dsp}{\displaystyle}
\DeclareMathOperator{\diam}{\rm diam}
\DeclareMathOperator{\ppr}{Pr}
\newcommand{\br}{{\mathbb{R}}}
\newcommand{\bz}{{\mathbb{Z}}}
\newcommand{\bc}{{\mathbb{C}}}
\newcommand{\bi}{{\mathbf{i}}}
\newcommand{\bii}{{\mathbf{I}}}
\renewcommand{\nn}{{\nabla}}
\renewcommand{\l}{\lambda}
\newcommand{\vp}{\varphi}
\newcommand{\la}{\langle}
\newcommand{\ra}{\rangle}
\newcommand{\cp}{\mathcal{P}}
\newcommand{\ca}{\mathcal{A}}
\newcommand{\ccc}{\mathcal{C}}
\newcommand{\cw}{\mathcal{W}}
\begin{document}

\title[It\^o's diffusion in multidimensional scattering \ldots]
{It\^o's diffusion in multidimensional scattering with
sign-indefinite potentials.}

\author[Sergey A. Denisov]{Sergey A. Denisov}

\address{Mathematics Department, University of Wisconsin--Madison,   480 Lincoln Dr.,  Madison, WI 53706 USA}
\email{denissov@math.wisc.edu}

\keywords{Absolutely continuous spectrum, Schr\"odinger operator,
It\^o stochastic calculus, Feynman-Kac type formulae.}

\subjclass{Primary: 35P25, Secondary: 31C15, 60J45.}


\begin{abstract}
This paper extends some results of \cite{dk} to the case of
sign-indefinte potentials by applying methods developed in
\cite{d1}. This enables us to prove the presence of a.c. spectrum
for the generic coupling constant.
\end{abstract}

\maketitle

\section*{Introduction}\label{s0}

In this paper, we consider the Schr\"odinger operator
\begin{equation}
H_\lambda=-\Delta+\lambda V, \quad x\in \mathbb{R}^3 \label{s1}
\end{equation}
where $V$ is real-valued potential and $\lambda$ is a real parameter
usually called a coupling constant. We will  study the dependence of
the absolutely continuous spectrum of $H$ on the behavior of
potential $V$ by blending the methods of two papers \cite{dk} and
\cite{d1}. This question attracted much attention recently which
resulted in many publications (see, e.g. \cite{dk}-\cite{d4},
\cite{s1}-\cite{s2}, \cite{s3}-\cite{s4}) due to new very fruitful
ideas from approximation theory finding the way in the
multidimensional scattering problems.

In \cite{dk}, we introduced the stochastic differential equation
that has random trajectories as its solutions. These trajectories
are natural for describing the scattering properties of (\ref{s1})
provided that $\lambda V\geq 0$ . The reason why this requirement
was made is rooted in the method itself. Indeed, as $\lambda V\geq
0$, we have $\sigma(H_\lambda)\subseteq [0,\infty)$ for the spectrum
of $H_\lambda$. Moreover, the Green's function
$L(x,y,k)=(-\Delta+\lambda V-k^2)^{-1}(x,y,k)$ can be analytically
continued in $k$ to the whole upper half-plane $\mathbb{C}^+$ and
methods of the complex function theory can be used then. If $V$ is
sign-indefinite, the negative spectrum might occur, which is hard to
control, and this approach breaks down. The paper \cite{d1} (see
also \cite{s4}) however develops new technique which allows to
overcome this difficulty by complexifying the coupling constant and
considering the hyperbolic pencil
\[
P_\lambda(k)=-\Delta+k\lambda V-k^2
\]
instead. Then, provided that the spacial asymptotics for the new
Green's kernel $P^{-1}_\lambda(k)(x,y,k)$ is established, we can
conclude that the a.c. spectrum of $H_\lambda$ contains $[0,\infty)$
for a.e. $\lambda$. In the next section, we state the main result of
\cite{dk} and explain how it can be generalized to the
sign-indefinite case.

We are going to use the following notation. Let $\omega_R(r)$ be
infinitely smooth function on $\mathbb{R}^+$ such that
$\omega_R(r)=1$ for $r<R-1$, $\omega_R(r)=0$ for $r>R+1$, and $0\leq
\omega_R(r)\leq 1$. The function
\[
L^0(x,y,k)=\frac{e^{ik|x-y|}}{4\pi|x-y|}
\]
denotes the Green's function of the free 3d Schr\"odinger operator,
i.e. the kernel of $R_0(k^2)=(-\Delta-k^2)^{-1}$ when $\Im k>0$. The
standard symbol $B_t$ stands for the $3$-dimensional Brownian motion
and $\mathbb{S}^2$ denotes the two-dimensional unit sphere.

We consider the three-dimensional case only as it makes the writing
easy. The method however can be applied for any $d>1$. We will often
suppress the dependence on $\lambda$ unless we want to emphasize it.
\section{Main result}

 We start with stating some results from \cite{dk}. Consider the Lipschitz vector field
\[
 p(x)=\left(\frac{I'_\nu(|x|)}{I_\nu(|x|)}-\nu |x|^{-1}\right)\cdot
\frac{x}{|x|}, \quad \nu=1/2
\]
where $I_\nu$ denotes the modified Bessel function \cite[Sect.
9.6]{as}. Then, fix any point $x^0\in\mathbb{R}^3$ and consider the
following stochastic process
\begin{equation}
dX_t=p(X_t)dt+dB_t, \quad X_0=x^0 \label{stochastic}
\end{equation}
with the drift given by $p$. The solution to this diffusion process
exists and all trajectories are continuous  and escape to infinity
almost surely. One of the main results in \cite{dk} states (assume
here that $\lambda=1$)

\begin{theorem}[\cite{dk}]\label{th2}
Let $V$ be any continuous nonnegative function. Assume that $f\in
L^2(\mathbb{R}^3)$ is nonnegative and has a compact support. Let
$\sigma_f$ be the spectral measure of $f$ with respect to $H_V$ and
$\sigma'_f$ be the density of its a.c. part. Then we have
\begin{equation}
\exp\left[\frac{1}{2\pi}\int_\br \frac{\log
\sigma'_f(k^2)}{1+k^2}dk\right]\ge C_f\int f(x^0)\mathbb{E}_{x^0}
\left[ \exp\left(-\frac 12\int\limits_0^\infty V(X_\tau)
 d\tau\right)\right]dx^0 \label{th21}
\end{equation}
where the constant $C_f>0$ does not depend on $V$.
\end{theorem}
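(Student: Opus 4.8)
My plan is to connect the left-hand side — an entropy integral of the spectral density — to the Brownian-type functional on the right through a Feynman–Kac representation of the perturbed Green's function, using the specific choice $\nu=1/2$ to make the diffusion $X_t$ the natural carrier of the free resolvent kernel $L^0$. The first step is to recall the entropy bound from scattering theory: for a nonnegative compactly supported $f$, the quantity $\exp[\frac1{2\pi}\int_\br \frac{\log\sigma_f'(k^2)}{1+k^2}dk]$ can be bounded below by (a constant times) $\liminf$ of $|m_f(k)|$ or of a suitable boundary trace of the analytic function $\int f(x)L(x,y,k)f(y)\,dx\,dy$ as $k\to i$ or $k\to 0$ along the imaginary axis, via Jensen's inequality applied to the Herglotz/outer function built from $\sigma_f$. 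Since $\lambda V\ge 0$, the resolvent $L(x,y,k)=(-\Delta+V-k^2)^{-1}(x,y)$ is analytic in $\bc^+$ and one has the standard entropy estimate controlling $\log\sigma_f'$ from below by the logarithm of this diagonal quadratic form evaluated at an interior point.

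The second and central step is the Feynman–Kac representation. One rewrites
\[
L(x,y,k)=\int_0^\infty e^{-t(-\Delta+V-k^2)}(x,y)\,dt,
\]
but because we want the drift $p$ rather than a driftless heat kernel, I would instead use the ground-state (Doob $h$-transform) substitution: with $h(x)=|x|^{-\nu}I_\nu(|x|)$ one has $-\Delta h = h - \frac{(\nu^2-1/4)}{|x|^2}h$, and for $\nu=1/2$ the curvature term vanishes, so $h$ is a positive solution of $-\Delta h = h$. Conjugating $-\Delta+V-k^2$ by $h$ turns the Laplacian into the generator $\frac12\Delta + p\cdot\nabla$ of the diffusion (\ref{stochastic}) (up to the usual factor $\tfrac12$ absorbed into a time change), and the potential $V$ simply rides along. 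Then Feynman–Kac for this diffusion gives, for $k^2$ in the resolvent set,
\[
\int f(x^0)\, h(x^0)^{-1}\!\left(\!\int h(y)^{-1}f(y)\, (\text{kernel})\, dy\right)dx^0
= \int f(x^0)\,\be_{x^0}\!\left[\exp\!\Big(-\tfrac12\!\int_0^\infty V(X_\tau)\,d\tau + (\text{term from }k^2)\Big)\right]dx^0.
\]
Choosing the spectral parameter so that the $k^2$-term becomes harmless (it contributes only a fixed multiplicative constant, or is killed in the limit), the right-hand side above becomes exactly the functional appearing in (\ref{th21}); the escape-to-infinity of $X_t$ a.s. guarantees the improper time integral and the whole expectation are well defined, and monotonicity in $V\ge 0$ keeps everything bounded by $1$.

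The third step is to assemble the two: bound the entropy integral below by $C\liminf |m_f(k)|$, express $m_f(k)$ (or its relevant boundary value) via the Feynman–Kac formula, and absorb all $V$-independent factors — the value of $h$ on the support of $f$, the constant from Jensen's inequality, the $k^2$-contribution — into a single constant $C_f>0$ depending only on $f$ (through its support and size) but not on $V$. I expect the main obstacle to be the third step's limiting argument: one must justify passing to the boundary of $\bc^+$ (or to $k=0$) in the Feynman–Kac identity while simultaneously controlling the entropy integral, since the naive interior estimate may degenerate; the remedy is to keep $k=i$ fixed (or another interior point), prove the inequality there with an explicit constant, and only afterwards invoke the elementary fact that $\frac1{2\pi}\int_\br\frac{\log\sigma_f'(k^2)}{1+k^2}dk \ge \log|m_f(i)| - \text{const}$, which holds because $m_f$ is a Herglotz function with $\sigma_f'$ as its a.c. density. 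A secondary technical point is verifying that the $h$-transform genuinely produces the \emph{Lipschitz} vector field $p$ of the statement and that Feynman–Kac applies with merely continuous $V$ and an unbounded (but linearly controlled at infinity, since $p(x)\to x/|x|$) drift; this is handled by the standard localization/stopping-time approximation together with the a.s. transience of $X_t$ already asserted after (\ref{stochastic}).
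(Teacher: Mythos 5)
The centerpiece of your argument --- the claimed ``elementary fact'' that $\frac1{2\pi}\int_\br\frac{\log\sigma'_f(k^2)}{1+k^2}\,dk\ge\log|m_f(i)|-\mathrm{const}$ because $m_f$ is Herglotz with a.c.\ density $\sigma'_f$ --- is false, and this is where the proof breaks. For a Herglotz function the boundary relation is $\pi\sigma'_f=\Im m_f\le |m_f|$, so Jensen/mean-value arguments give the \emph{opposite} inequality (an upper bound of the entropy integral by $\log\Im m_f(i)$ plus a constant); a lower bound of the entropy by an interior value of $m_f$ cannot hold for general Herglotz functions (take $\sigma_f$ purely singular: the left side is $-\infty$ while $m_f(i)$ is finite), and within the class at hand such a bound is essentially the content of the theorem, so invoking it is circular. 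The mechanism the paper actually uses, and which your outline lacks, is the exact factorization $\sigma'_f(k^2)=C|k|^{d-2}\|A_f(\cdot,k)\|^2_{L^2(\Sigma_1)}$, where $A_f(\theta,k)$ is the scattering amplitude read off from the spatial asymptotics of $(H-k^2)^{-1}f$; since $A_f$ is analytic in $k\in\bc^+$, the function $\log\|A_f(\cdot,k)e^{ikC_f}\|_{L^2(\Sigma_1)}$ is subharmonic, and the mean-value inequality over an exhaustion of $\bc^+$ with reference point $k=i$ (justified by limiting-absorption estimates near $k=0$, on the real axis for large $|k|$, and for large $\Im k$) gives the needed lower bound. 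Because $\sigma'_f$ \emph{equals} the squared modulus of boundary values of an analytic vector-valued function, the interior value controls the entropy from below; for $\Im m_f$ it does not.

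A second, related gap: the object that admits the Feynman--Kac representation with the full functional $\exp\bigl(-\frac12\int_0^\infty V(X_\tau)\,d\tau\bigr)$ is not the diagonal form $m_f(i)=\langle f,(H+1)^{-1}f\rangle$ but the amplitude $a_{x^0}(\theta,i)$: one solves the Dirichlet problem in $\{|x|<R\}$ with a source on the spherical layer $\{r<|x|<r+1\}$, performs the substitution $\psi=F\phi$ with $F(x)=|x|^{-\nu}I_\nu(|x|)$ (note $\Delta F=F$, not $-\Delta F=F$; the drift is $p=\nabla\log F$, which is your $h$-transform and is indeed the paper's step), and only after letting $R\to\infty$ and then $r\to\infty$, using the strong Markov property, does the infinite-time integral of $V$ along the whole trajectory appear, giving $\int_{\Sigma_1}a_{x^0}(\theta)\,d\theta=C\,\be_{x^0}\bigl[\exp\bigl(-\frac12\int_0^\infty V(X_\tau)\,d\tau\bigr)\bigr]$; this is then matched to $\|A_f(\cdot,i)\|_{L^2(\Sigma_1)}\gtrsim\int\!\!\int f(x^0)a_{x^0}(\theta,i)\,d\theta\,dx^0$ by positivity. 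A representation of $m_f(i)$ itself only involves the potential up to the (weighted) times the path spends near the support of $f$, not $\int_0^\infty V(X_\tau)\,d\tau$, so your step two does not produce the right-hand side of the theorem as written. Finally, the extension from compactly supported to general continuous $V\ge0$ (truncation, weak-star convergence of the spectral measures, semicontinuity of the entropy in the sense of Killip--Simon, monotone convergence on the probabilistic side) is needed and is absent from your proposal.
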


The natural corollary of this theorem is a statement that the a.c.
spectrum of $H$ fills all of $\mathbb{R}^+$ provided that the
potential $V$ is summable along the trajectory $X_t$ with positive
probability (which is the same as saying that there are
``sufficiently many" paths over which the potential is summable).

We are going to prove the following

\begin{theorem}Assume that $V$ is bounded and continuous on
$\mathbb{R}^3$ and
\begin{equation}
\int_0^\infty |V(X_t)|dt<\infty\label{rest}
\end{equation}
with positive probability. Then $\mathbb{R}^+$ supports the a.c.
spectrum of $H_\lambda$ for a.e. $\lambda$.\label{tnew}
\end{theorem}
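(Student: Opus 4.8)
The plan is to reduce Theorem \ref{tnew} to the sign-definite result (Theorem \ref{th2}) by the complexification trick from \cite{d1}. The idea is that the hyperbolic pencil $P_\lambda(k) = -\Delta + k\lambda V - k^2$ plays, for complex $k$, the role that $H_\lambda - k^2$ plays in the positive-potential case: after the substitution, the "potential" term $k\lambda V$ acquires a favorable sign in a suitable half-plane, so the probabilistic machinery of \cite{dk} applies to the resolvent kernel of the pencil. Concretely, I would first fix a nonnegative, compactly supported $f\in L^2(\mathbb R^3)$ with $f(x^0)>0$ on the set of starting points for which \eqref{rest} holds with positive probability, and work with the spectral measure of $f$.

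First I would record the analytic-function-theory input: for $\Im k>0$, the kernel $P_\lambda^{-1}(k)(x,y)$ is analytic in $\lambda$ and admits, by the same diffusion representation as in \cite{dk}, a Feynman--Kac type formula in which $V$ is integrated along the process $X_t$ of \eqref{stochastic} against the factor $\exp(-\tfrac12 k\lambda \int_0^\infty V(X_\tau)\,d\tau)$. The key point is that \eqref{rest} guarantees this stochastic integral is finite on a set of $x^0$ of positive measure, so the expectation is a nonzero entire function of $\lambda$ for each fixed $k$. Second, I would run the entropy estimate of Theorem \ref{th2} with $V$ replaced by $k\lambda V$ (equivalently, track how the left-hand side of \eqref{th21}, an integral of $\log\sigma'_f$, depends on the complexified parameter), obtaining a lower bound for a quantity controlling the a.c. spectrum of the pencil that is uniform over $k$ in a fixed region and is a nontrivial analytic function of $\lambda$.

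The heart of the argument is then the standard "analyticity in the coupling constant" step of \cite{d1}: the bound just obtained shows that the relevant entropy is finite — hence the a.c. spectrum is present — for all $\lambda$ outside the zero set of a nonconstant analytic function, and such a zero set has Lebesgue measure zero. Letting $f$ exhaust an appropriate family and $k$ range over $(0,\infty)$ (via a limiting argument $\Im k\downarrow 0$, using boundedness of $V$ to control the boundary values), one concludes that $[0,\infty)\subseteq\sigma_{ac}(H_\lambda)$ for a.e. $\lambda$.

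The main obstacle I anticipate is the limiting/boundary step: transferring the estimate from the pencil $P_\lambda(k)$ with $\Im k>0$ back to the operator $H_\lambda$ on the real axis, and in particular controlling $P_\lambda^{-1}(k)$ as $\Im k\downarrow0$ uniformly enough to keep the logarithmic integral of $\sigma'_f$ bounded. This is exactly where boundedness and continuity of $V$ in the hypothesis, together with the decay of $X_t\to\infty$ a.s., must be used; the probabilistic part (the Feynman--Kac representation and the a.s. escape of the trajectories) carries over from \cite{dk} with only cosmetic changes, and the measure-zero conclusion is a soft consequence of analyticity once the nonvanishing of the relevant expectation has been secured from \eqref{rest}.
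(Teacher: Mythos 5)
You have the right starting point (the hyperbolic pencil $P_\lambda(k)=-\Delta+k\lambda V-k^2$ of \cite{d1} and a Feynman--Kac representation for its Green's kernel), but the engine that produces ``a.e.\ $\lambda$'' in your plan is not the one that works, and a key uniformity step is missing. First, the relevant Feynman--Kac formula is not along the diffusion $X_t$ of \eqref{stochastic} with a term of ``favorable sign'': at $k=i$ the pencil contributes the unimodular factor $\exp\bigl(-\tfrac{i\lambda}{2}\int_0^\infty V_R(G_\tau)d\tau\bigr)$ along the straight-drift process $G_t$ of \eqref{potok} (formula \eqref{fk1}); nothing becomes sign-definite, one only gains that the integrand has modulus one, and hypothesis \eqref{rest} is used to make the phase small. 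Moreover, the argument needs this \emph{uniformly} in the cutoff $R$ and in $\lambda\in[-c,c]$, and that is obtained by first discarding $V$ on a large ball (harmless by trace-class stability of $\sigma_{ac}$) and invoking Lemma \ref{lg}, which says $\mathbb{E}\exp\bigl(-\tfrac12\int_0^\infty|V^{(\rho)}(G_t)|dt\bigr)\to1$ as $\rho\to\infty$; this is the decisive quantitative input (and it is false for $X_t$, as the remark after the lemma notes), yielding the uniform amplitude bound $|b_R^{(\rho)}(\theta)|>1/2$ of \eqref{oc1}. Your proposal has no substitute for this step, and without it the later limit $R\to\infty$ cannot be taken.

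Second, the ``analyticity in the coupling constant / zero set of measure zero'' step is not how the conclusion is reached, and as stated it has real gaps: $\ln\sigma'_f(E,\lambda)$ on the real axis is not an analytic function of $\lambda$, the expectation you want to be a ``nonzero entire function of $\lambda$'' need not even be analytic off the real axis (the modulus of $\exp(-\tfrac{i\lambda}{2}\int V)$ is uncontrolled when $\int_0^\infty|V(G_t)|dt=\infty$ with positive probability), and non-vanishing for a.e.\ $\lambda$ at fixed $R$ would not give bounds uniform enough to pass to the limit in $R$. What actually happens is different: for real $k$ the pencil equals $H_{k\lambda}-k^2$, so the subharmonicity/mean-value argument in $k$ (using \eqref{fz} and the estimates near $k=0$ and $k=\infty$) produces $\int_{\mathbb R}\frac{\ln\sigma'_{\rho,R}(k^2,k\lambda)}{k^2+1}\,dk>-C$, an entropy bound in which the energy and the coupling constant are tied together along the diagonal $(k^2,k\lambda)$. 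One then integrates this bound over $\lambda\in[-c,c]$, changes variables to obtain $\int_a^b dE\int_{-c}^c\ln\sigma'_{\rho,R}(E,\lambda)\,d\lambda>-C$ uniformly in $R$, passes to $R\to\infty$ by semicontinuity of the entropy, and only then extracts the statement for a.e.\ fixed $\lambda$ by Fubini--Tonelli, finishing with trace-class stability to restore the full potential. The boundary passage $\Im k\downarrow 0$ that you single out as the main obstacle is comparatively routine (limiting absorption principle feeding the mean-value inequality for a subharmonic function); the genuinely missing ideas are the uniform amplitude bound via Lemma \ref{lg} and the Fubini-in-$\lambda$ mechanism that replaces your zero-set argument.
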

{\bf Remark.} The conditions of continuity and boundedness of
potential are assumed for simplicity only and can probably be
relaxed. The condition of $\lambda$ being generic is perhaps also
redundant but this method does not yield any result for a particular
value of $\lambda\neq 0$. Under the conditions of the theorem, the
a.c. spectrum can be larger than the positive half-line as can be
easily seen upon taking $V=-1$ on  half-space and $V=0$ on the
complement.

We need to start with some preliminary results. They will be mostly
concerned with the study of the kernel of the operator
$P_R^{-1}(k)=(-\Delta+k\lambda V_R+k^2)^{-1}$ where
$V_R(x)=V(x)\cdot \omega_R(|x|)$. The existence of $P^{-1}(k)$ for
$\Im k>0$ as a bounded operator from $L^2(\mathbb{R}^3)$ to
$L^2(\mathbb{R}^3)$ was proved in \cite{d1}. Denote the kernel of
$P_R^{-1}(x,y,k)$ by $K_R(x,y,k)$ and compare it to the free Green's
kernel $L^0(x,y,k)$ in the following way. For $k=i$, we introduce
the amplitude
\[
b_R(\theta)=\lim_{r\to\infty} \frac{K_R(0,r\theta,i)}{L^0(0,r,i)}
\]
where $\theta\in \mathbb{S}^2$. If $L_R(x,y,k)$ is the Green's
function for $(-\Delta+|V_R|-k^2)^{-1}$, then similarly
\[
a_R(\theta)=\lim_{r\to\infty}
\frac{L_R(0,r\theta,i)}{L^0(0,r,i)},\quad
a(\theta)=\lim_{R\to\infty} a_R(\theta)
\]
In \cite{dk}, the following formulas were proved
\begin{equation}
\int\limits_{\mathbb{S}^2} a(\theta)d\theta=C_1\mathbb{E}_{X_0=0}
\left[ \exp\left(-\frac12 \int\limits_0^\infty
|V(X_\tau)|d\tau\right)\right] \label{feynman-kac}
\end{equation}

For $\theta\in\mathbb{S}^2$, let
\begin{equation}
dG_t=\theta dt+dB_t \label{potok}
\end{equation}
 Then
\begin{equation}
a(\theta)=C_2\mathbb{E}_{G_0=0} \left[ \exp\left(-\frac12
\int\limits_0^\infty |V(G_\tau)|d\tau\right)\right]\label{fk}
\end{equation}

The condition (\ref{rest}) yields
\[
\int\limits_{\mathbb{S}^2} a(\theta)d\theta>0
\]
and thus $a(\theta)>0$ for $\theta\in \Omega\subseteq \mathbb{S}^2$
and $|\Omega|>0$. In particular, that means

\begin{equation}
\mathbb{E}_{G_0=0} \left[\exp\left(-\frac 12\int_0^\infty
|V(G_t)|dt\right)\right]>0 \label{cond1}
\end{equation}
for any $\theta\in \Omega$. The bound (\ref{cond1}) is exactly what
we are going to use in this paper.

The first step is to prove an analog of the formula (\ref{fk}) for
the function $b_R(\theta)$. The  lemma below holds for any $\lambda$
so we take $\lambda=1$ for the shorthand.

\begin{lemma} If $G_t$ is defined by (\ref{potok}), then
\begin{equation}
b_R(\theta)=C_2\mathbb{E}_{G_0=0} \left[ \exp\left(-\frac{i}{2}
\int\limits_0^\infty V_R(G_\tau)d\tau\right)\right]\label{fk1}
\end{equation}
for any $R>0$.
\end{lemma}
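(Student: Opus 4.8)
The plan is to derive a Feynman--Kac type representation for the kernel $K_R(x,y,i)$ of $P_R^{-1}(i)=(-\Delta+i V_R-1)^{-1}$ in close parallel with the derivation of \eqref{fk} for $L_R$, the only genuine difference being that the nonnegative potential $|V_R|$ is replaced by $iV_R$, which is why the factor $-\tfrac12\int|V_R|$ in \eqref{fk} becomes $-\tfrac{i}{2}\int V_R$ in \eqref{fk1}. First I would recall that, since $V_R$ has compact support, $-\Delta+iV_R-1$ is a bounded-below analytic perturbation of $-\Delta-1$ and the resolvent identity gives
\[
P_R^{-1}(i)=R_0(-1)\bigl(I+iV_RR_0(-1)\bigr)^{-1},
\]
where $R_0(-1)=(-\Delta-1)^{-1}$ has the kernel $L^0(x,y,i)=e^{-|x-y|}/(4\pi|x-y|)$; the invertibility of $I+iV_RR_0(-1)$ on $L^2$ for this value of $k$ is exactly the statement quoted from \cite{d1}. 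Iterating this into the Neumann/Born series expresses $K_R(0,r\theta,i)$ as $\sum_n (-i)^n$ times $n$-fold spatial integrals of products of $L^0$ against $V_R$. On the probabilistic side, the transition density of the drifted Brownian motion $dG_t=\theta\,dt+dB_t$ run for infinite (exponentially killed) time is precisely a Green's-function-type kernel built out of $L^0$; this is the identity that \cite{dk} already exploited to prove \eqref{feynman-kac} and \eqref{fk}. So the second step is to match the Born series with the expansion of the exponential $\exp\!\bigl(-\tfrac{i}{2}\int_0^\infty V_R(G_\tau)\,d\tau\bigr)$ term by term, using the Markov property of $G_t$ to factor the $n$-th term of $\mathbb{E}_{G_0=0}[\cdots]$ into an $n$-fold iterated integral with the same $L^0$-kernels and the same combinatorial weight.

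The cleanest way to organize this, and the route I would actually take, is to avoid reproving the Brownian identities from scratch and instead invoke analyticity in the coupling constant. Formula \eqref{fk} (together with its $R$-dependent precursor, which is what is really used) is the statement that $a_R(\theta)=C_2\,\mathbb{E}_{G_0=0}\bigl[\exp(-\tfrac12\int_0^\infty |V_R|(G_\tau)\,d\tau)\bigr]$, and its proof in \cite{dk} goes through the resolvent $(-\Delta+|V_R|-k^2)^{-1}$ at $k=i$. That proof uses nothing about the sign of the potential beyond what is needed for the resolvent to exist as a bounded operator and for the relevant integrals/series to converge; it works verbatim with $|V_R|$ replaced by any bounded, compactly supported, possibly complex potential $W$ for which $(-\Delta+W-k^2)^{-1}$ exists near $k=i$, giving
\[
\lim_{r\to\infty}\frac{(-\Delta+W+1)^{-1}(0,r\theta,i)}{L^0(0,r,i)}
  =C_2\,\mathbb{E}_{G_0=0}\Bigl[\exp\Bigl(-\tfrac12\int_0^\infty W(G_\tau)\,d\tau\Bigr)\Bigr].
\]
Applying this with $W=iV_R$ (so that $-\Delta+W+1=-\Delta+iV_R+1$, matching $P_R^{-1}(i)$) and with $-\tfrac12 W=-\tfrac{i}{2}V_R$ yields \eqref{fk1} directly. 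One has to check that $iV_R$ is admissible: $V_R$ is bounded with compact support, $iV_RR_0(-1)$ is Hilbert--Schmidt hence compact, and $\|iV_RR_0(-1)\|$ at $k=i$ is controlled well enough that the Born series converges (this is the content of the existence result from \cite{d1}); boundedness of $V_R$ also makes $\int_0^\infty V_R(G_\tau)\,d\tau$ absolutely convergent a.s. because $G_t\to\infty$ a.s. and leaves every ball in finite time, so the exponential and all its series terms are integrable and the term-by-term matching is legitimate.

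The main obstacle is purely one of convergence bookkeeping rather than of new ideas: in the real case the key estimates in \cite{dk} are often phrased using positivity (monotone convergence, domination by $1$ for $\exp(-\tfrac12\int|V_R|)$, positivity of kernels in the Laplace-transform identity for $G_t$), and with $W=iV_R$ the integrand $\exp(-\tfrac{i}{2}\int V_R(G_\tau)\,d\tau)$ is only bounded in modulus by $1$ while the individual Born terms are no longer sign-definite, so the interchange of limit ($r\to\infty$), sum over $n$, and expectation must be justified by absolute bounds. This is where one uses that $\|V_R\|_\infty<\infty$ and $V_R$ has compact support contained in $\{|x|\le R+1\}$: the $n$-th Born term is bounded by $\|V_R\|_\infty^n$ times the same $n$-fold integral of $|L^0|$ that appears for the nonnegative potential $|V_R|$, so the series converges absolutely and uniformly in $r$, and dominated convergence applies exactly as in the $|V_R|$ case. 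Once the admissibility and these uniform bounds are in place, the identity \eqref{fk1} follows, for every $R>0$, with the \emph{same} constant $C_2$ as in \eqref{fk} since the constant comes only from the asymptotics of $L^0$ and the normalization of the $G_t$-Green kernel, neither of which sees the potential.
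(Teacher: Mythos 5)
Your overall strategy coincides with the paper's: nothing new is invented for the oscillatory weight, one simply reruns the limiting argument of \cite{dk} for the resolvent of $-\Delta+iV_R+1$, using the invertibility supplied by \cite{d1} and the bound $\left|\exp\left(-\frac{i}{2}\int_0^t V_R(G_\tau)d\tau\right)\right|\le 1$ in place of positivity. Two points, however, need repair. First, the convergence bookkeeping you offer is wrong exactly where it is needed. The Born/Neumann series for $(I+iV_RR_0(-1))^{-1}$ at $k=i$ is a geometric-type series with no $1/n!$ gain (that gain is a feature of the Duhamel expansion of the semigroup, not of the resolvent at a fixed energy), so bounding the $n$-th term by $\|V_R\|_\infty^n$ times an $n$-fold integral of $|L^0|$ produces a series that diverges as soon as $\|V_R\|_\infty$ or $R$ is large; absolute convergence ``uniformly in $r$'' cannot be claimed, and the proposed term-by-term matching with the expansion of the exponential is not legitimate in general. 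The paper never expands anything: all interchanges are carried out by dominated convergence, the dominating quantities being the same ones already used for the nonnegative potential, since the exponential has modulus at most one and the source terms are nonnegative.

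Second, the assertion that the proof of (\ref{fk}) in \cite{dk} ``works verbatim'' for a complex potential silently assumes the one statement that actually has to be proved, and which constitutes essentially the entire written proof in the paper: the stochastic representation (\ref{foc}) of the solution of the Dirichlet problem $\frac12\Delta\phi+\phi_{x_1}-\frac{i}{2}V\phi=-F$, $\phi|_{\Sigma_r}=0$. In \cite{dk} this step is quoted from \O ksendal, where it is stated for real potentials; for the purely imaginary potential the paper re-derives it (the Proposition inside the lemma's proof) by applying It\^o's formula to $\Xi_t=\phi(G_t)\exp\left(-\frac{i}{2}\int_0^t V(G_\tau)d\tau\right)$, using the vanishing expectation of the stochastic integral and the Dirichlet condition at the exit time; moreover, the very existence of $\phi$ requires inverting the non-self-adjoint operator $-\Delta+1+iV$ with Dirichlet boundary condition, which is precisely where the hyperbolic-pencil theory of \cite{d1} enters, and your appeal to analyticity in the coupling constant does not substitute for it. Once this representation is established, the remainder is indeed the word-for-word repetition of the \cite{dk} limiting argument (with dominated convergence replacing monotonicity) that you describe, so your proposal captures the right skeleton but omits its key supporting step and props up the interchanges with an argument that fails.
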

\begin{proof} We need the following
\begin{proposition}Let $\Sigma_r$ and $B_r$ denote the sphere and closed ball of radius $r$
both centered at the origin. If $V$ is continuous and real-valued in
$B_r$ and $F(x)\in C(B_r)$, then $\phi(x)$, the solution to
\[
\frac 12 \Delta \phi+\phi_{x_1}-\frac{i}{2}V\phi=-F, \quad \phi
|_{\Sigma_r}=0
\]
admits the following representation
\begin{equation}
\phi(x)=\mathbb{E}_{G_0=x} \left[\int_0^T
\exp\left(-\frac{i}{2}\int_0^t
V(X(G_\tau))d\tau\right)F(G_t)dt\right] \label{foc}
\end{equation}
where $G_t=t(1,0,0)+B_t, G_0=x$ and $T$ is the exit time.
\end{proposition}
Notice that the solution $\phi$ always exists as the boundary
problem can be reduced to inverting the operator $-\Delta+1+iV$ with
Dirichlet boundary condition. This invertibility is a simple
corollary of the spectral theory for hyperbolic pencils and it was
proved in \cite{d1} in the context of the operators on the whole
space. The existence of the expectation in the right hand side of
(\ref{foc}) is guaranteed by $V\in \mathbb{R}$ and the fact that all
trajectories $\{G_t\}$ are continuous almost surely and the exit
time distribution has a small tail.

\begin{proof}{\it (proposition 1.1)}
This proof is quite standard for negative potentials (see \cite{af},
p.145 and \cite{oks}, lemma 7.3.2 ) but we present it here for the
reader's convenience. Take any $x, |x|<r$ and consider
\[
\Xi_t=\phi(G_t)\exp\left(-\frac{i}{2}\int_0^t V(G_\tau)d\tau\right),
\quad G_0=x, \quad t<T
\]
By It\^o's calculus,
\[
d\Xi_t=\exp\left(-\frac{i}{2}\int_0^t
V(G_\tau)d\tau\right)d\phi(G_t)-\frac{i}{2}
\phi(G_t)V(G_t)\exp\left(-\frac{i}{2}\int_0^t
V(G_\tau)d\tau\right)dt
\]
as
\[
d\phi(G_t)=\left(\phi_{x_1}(G_t)+\frac{\Delta}{2}
\phi(G_t)\right)dt+\phi_{x_1}(G_t)dB_t
\]
and
\[
d\exp\left(-\frac{i}{2}\int_0^t
V(G_\tau)d\tau\right)=-\frac{i}{2}V(G_t)\exp\left(-\frac{i}{2}\int_0^t
V(G_\tau)d\tau\right)dt
 \]
 Since
  $\Xi_0=\phi(0)$, we have
\[
\mathbb{E}_x(\Xi_T)=\phi(x)+\mathbb{E}_{x}\left[ \int_0^T
-F(G_t)\exp\left(-\frac{i}{2}\int_0^t V(G_\tau)d\tau\right)dt
\right]
\]
\[
+\mathbb{E}_x \left[\int_0^T
\phi_{x_1}(G_t)\exp\left(-\frac{i}{2}\int_0^t
V(G_\tau)d\tau\right)dB_t\right]
\]
and the last term is equal to zero. As the left hand side is equal
to zero as well due to the Dirichlet boundary conditions imposed, we
have the statement of the proposition.
\end{proof}
Now the proof of the lemma repeats  the proof of the formula (2.8)
(theorem 2.1, \cite{dk}) word for word.
\end{proof}
Assume that $\theta\in \Omega$  so (\ref{cond1}) holds. Consider
truncations $V^{(\rho)}(x)=V(x)\cdot (1-\omega_\rho(|x|))$.
\begin{lemma}\label{lg}
If $\theta\in \Omega$, then
\begin{equation}
\mathbb{E}_{G_0=0}\left[\exp\left(-\frac 12\int_0^\infty
|V^{(\rho)}(G_t)|dt\right)\right]\to 1
\end{equation}
as $\rho\to\infty$.
\end{lemma}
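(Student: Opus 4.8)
The plan is to exploit the monotone structure of the quantity as $\rho\to\infty$ together with the integrability hypothesis (\ref{cond1}) that is available precisely for $\theta\in\Omega$. First I would observe that the random variable
\[
\Phi_\rho:=\exp\left(-\frac 12\int_0^\infty |V^{(\rho)}(G_t)|\,dt\right)
\]
is bounded above by $1$ and, since $|V^{(\rho)}|=|V|\cdot(1-\omega_\rho(|\cdot|))\le |V|$ with $(1-\omega_\rho)\nearrow 1$ pointwise as $\rho\to\infty$, the integrals $\int_0^\infty|V^{(\rho)}(G_t)|\,dt$ decrease to $\int_0^\infty|V(G_t)|\,dt$ along \emph{every} fixed Brownian path; hence $\Phi_\rho\nearrow \mathbf 1_{\{\int_0^\infty|V(G_t)|\,dt<\infty\}}$ pointwise (almost surely, the indicator is replaced by $1$ on the event where the full integral converges, and by $0$ where it diverges, but $\Phi_\rho$ tends to $e^{-\frac12\cdot 0}=1$ on the convergence event and to a positive limit $e^{-\frac12\liminf\cdots}$ which need not be $1$ on the divergence event — so care is needed, see below). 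By the bounded (or monotone) convergence theorem, $\mathbb{E}_{G_0=0}[\Phi_\rho]$ converges to $\mathbb{E}_{G_0=0}$ of this limiting random variable.

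The substance of the argument is therefore to show that the limiting random variable equals $1$ almost surely, i.e. that $\int_0^\infty|V(G_t)|\,dt<\infty$ for $\mathbb{P}_{G_0=0}$-almost every path. This is where (\ref{cond1}) enters: for $\theta\in\Omega$ we know $\mathbb{E}_{G_0=0}[\exp(-\frac12\int_0^\infty|V(G_t)|\,dt)]>0$, which forces $\mathbb{P}_{G_0=0}(\int_0^\infty|V(G_t)|\,dt<\infty)>0$. To upgrade ``positive probability'' to ``probability one'' I would invoke a zero–one law: the event $A=\{\int_0^\infty|V(G_t)|\,dt<\infty\}$ is a tail event for the process $G_t=\theta t+B_t$, since for any $s>0$ we have $\int_0^\infty|V(G_t)|\,dt<\infty$ iff $\int_s^\infty|V(G_t)|\,dt<\infty$ (the potential $V$ is bounded, so the finite piece $\int_0^s$ is automatically finite), and $\{G_t-G_s:t\ge s\}$ is again a drifted Brownian motion independent of $\mathcal F_s$. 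Thus $A$ belongs to $\bigcap_{s}\sigma(B_u-B_s:u\ge s)$, the Blumenthal-type tail $\sigma$-field at infinity, on which Kolmogorov's zero–one law gives $\mathbb{P}_{G_0=0}(A)\in\{0,1\}$; combined with positivity, $\mathbb{P}_{G_0=0}(A)=1$.

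Once $\mathbb{P}_{G_0=0}(A)=1$ is established, $\Phi_\rho\to 1$ almost surely: on $A$, the tail integral $\int_\rho^\infty \cdots$ — more precisely $\int_0^\infty|V^{(\rho)}(G_t)|\,dt\le \int_{\{t:|G_t|\ge\rho-1\}}|V(G_t)|\,dt\to 0$ as $\rho\to\infty$ by dominated convergence against the finite measure $|V(G_t)|\,dt$ on $[0,\infty)$, using that $|G_t|\to\infty$ a.s. so each fixed $t$ eventually leaves the region $\{|x|\ge\rho-1\}$. Hence $\Phi_\rho\to e^0=1$ a.s., and since $0\le\Phi_\rho\le 1$, bounded convergence yields $\mathbb{E}_{G_0=0}[\Phi_\rho]\to 1$, which is the claim. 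The main obstacle is the zero–one law step: one must check carefully that $A$ is genuinely a tail event, which relies on the boundedness of $V$ (so that no divergence can be manufactured on a compact time interval) and on the Markov/independent-increments structure of the drifted Brownian motion; the rest is routine convergence-theorem bookkeeping.
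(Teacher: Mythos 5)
Your reduction of the lemma to the almost sure finiteness of $\int_0^\infty|V(G_t)|\,dt$ and the final bounded--convergence step are fine, but the zero--one law step contains a genuine flaw as written. The event $A=\{\int_0^\infty|V(G_t)|\,dt<\infty\}$ is \emph{not} an element of $\bigcap_s\sigma(B_u-B_s:u\ge s)$: for $t\ge s$ one has $G_t=\theta t+B_s+(B_t-B_s)$, so $\int_s^\infty|V(G_t)|\,dt$ depends on the position $B_s$ as well as on the post-$s$ increments, and Kolmogorov's zero--one law, which concerns the increment tail field only, does not apply to $A$. What your argument actually requires is the triviality of the full remote-future $\sigma$-field $\bigcap_s\sigma(B_u:u\ge s)$ of Brownian motion, which does contain $A$; this is true but strictly stronger than Kolmogorov's law and needs its own proof (for instance, time inversion $W_t=tB_{1/t}$ reduces it to Blumenthal's $0$--$1$ law at $t=0$, or one can use a mirror coupling to show that $\mathbb{P}(A\mid\mathcal{F}_s)$ does not depend on $G_s$ and then apply L\'evy's $0$--$1$ law). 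With that ingredient supplied your scheme does close: (\ref{cond1}) gives $\mathbb{P}(A)>0$, tail triviality upgrades this to $\mathbb{P}(A)=1$, and then $\int_0^\infty|V^{(\rho)}(G_t)|\,dt\to0$ a.s.\ and bounded convergence yield the lemma. A smaller point you glossed over: on the divergence event the limit of your $\Phi_\rho$ is in fact $0$ a.s.\ (not some intermediate value), since the occupation time of any fixed ball by $G_t$ is a.s.\ finite and $V=V\omega_\rho+V^{(\rho)}$, so divergence of the full integral forces divergence of every truncation; this is harmless once $\mathbb{P}(A)=1$ is known.

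For comparison, the paper's proof avoids any zero--one law. It fixes $0<R_1<R_2$, conditions at the hitting time $t_1$ of the sphere $|x|=R_1$ via the strong Markov property, and combines the elementary inequality (\ref{odin}) with the decoupling (\ref{dva}) as $R_2\to\infty$ to conclude that the monotone limit $\gamma$ of the truncated expectations satisfies $c\,\gamma\ge c$, where $c=\mathbb{E}_{G_0=0}\bigl[\exp\bigl(-\frac{1}{2}\int_0^\infty|V(G_t)|\,dt\bigr)\bigr]>0$ by (\ref{cond1}); hence $\gamma=1$. That route uses only the positivity of $c$ and never needs a.s.\ finiteness of the integral, the work being done by the quantitative decoupling along the drift direction. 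Your route is viable and in fact yields the stronger a.s.\ statement, but it stands or falls with the tail-triviality fact, which must be stated and justified correctly rather than attributed to Kolmogorov's law for increments.
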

\begin{proof}
Take any $0<R_1<R_2$ and introduce $t_1$, the random time of hitting
the sphere $|x|=R_1$ for the first time. Then, denoting by
$\tilde{G}(t)$ the solution to (\ref{potok}) with initial condition
$G_{t_1}$, we have elementary inequality
\begin{equation}
\mathbb{E}_{G_0=0}\left[\exp\left(-\frac 12\int_0^{t_1}
|V_{R_1/2}(G_t)|dt\right)\mathbb{E}_{G_{t_1}}\left[\exp\left(-\frac
12\int_{0}^\infty
|V^{(R_2)}(\tilde{G}_t)|dt\right)\right]\right]\label{odin}
\end{equation}
\[
\geq\mathbb{E}_{G_0=0}\left[\exp\left(-\frac 12\int_0^\infty
|V(G_t)|dt\right)\right]
\]
The trajectory $G_t$ is a linear drift plus 3d Browning motion
oscillation thus for fixed $R_1$ we have decoupling
\begin{equation}
\lim_{R_2\to\infty}\left(\mathbb{E}_{G_0=0}\left[\exp\left(-\frac
12\int_0^{t_1}
|V_{R_1/2}(G_t)|dt\right)\mathbb{E}_{G_{t_1}}\left[\exp\left(-\frac
12\int_{0}^\infty
|V^{(R_2)}(\tilde{G}_t)|dt\right)\right]\right]\right) \label{dva}
\end{equation}
\[
= \mathbb{E}_{G_0=0}\left[\exp\left(-\frac 12\int_0^{t_1}
|V_{R_1/2}(G_t)|dt\right)\right]\cdot \gamma
\]
with
\[
\gamma=\lim_{R_2\to\infty} \left(\mathbb{E}_{G_0=0}\left[
\exp\left(-\frac 12\int_0^\infty
|V^{(R_2)}(G_t)|dt\right)\right]\right)
\]

On the other hand,
\[
\mathbb{E}_{G_0=0}\left[\exp\left(-\frac 12\int_0^{t_1}
|V_{R_1/2}(G_t)|dt\right)\right]\to\mathbb{E}_{G_0=0}\left[\exp\left(-\frac
12\int_0^\infty |V(G_t)|dt\right)\right], \quad R_1\to\infty
\]
which along with (\ref{odin}) and (\ref{dva}) implies $\gamma=1$.
\end{proof}

{\bf Remark.}  Let $a^{(\rho)}$ denote an amplitude for the
potential $V^{(\rho)}$. The lemma then says that $a^{(\rho)}(\theta)
\to 1$ as $\rho\to\infty$ for any $\theta\in \Omega$. Notice also
that the lemma is wrong in general if the trajectory $G_t$ is
replaced by $X_t$ as can be easily seen by letting $V=1$ on the
half-space and $V=0$ on the complement.\smallskip

Now we are ready for the proof of theorem \ref{tnew}.
\begin{proof}{\it (theorem \ref{tnew})}

Notice first that the standard trace-class perturbation argument
\cite{rs} implies that
$\sigma_{ac}(-\Delta+V_\lambda)=\sigma_{ac}(-\Delta
+V_\lambda^{(\rho)})$ for any $\rho$. Fix some large $c>0$ and take
$\lambda\in [-c,c]$. Then, by lemma \ref{lg}, we can make $\rho$
large enough so that for any $\theta\in \Omega_1\subseteq \Omega$,
we have
\[
\mathbb{E}\left[\exp\left(-\frac{c}{2}\int_0^\infty
|V^{(\rho)}(G_t)|dt\right)\right]>0.99
\]
Then, due to (\ref{fk1}),
\begin{equation}
|b_R^{(\rho)}(\theta)|>1/2\label{oc1}
\end{equation}
for any $\theta\in \Omega_1$ and any $R>\rho$. Now, we need to
recall several results from \cite{d1} and repeat a couple of
arguments from this paper. Let $f=\chi_{|x|<1}$. Denote the spectral
measure of $f$ with respect to $-\Delta+\lambda V^{(\rho)}_R$ by
$\sigma_{\rho,R}(E,\lambda)$.  For $k\in \mathbb{C}^+$, consider
\[
J_{\rho,R}(k,\theta,\lambda)=\lim_{r\to\infty}
\frac{\left((-\Delta+\lambda
kV^{(\rho)}_R+k^2)^{-1}f\right)(r\theta)}{r^{-1}e^{ikr}}
\]
Then, the formula (38) from \cite{d1} says
\begin{equation}
\sigma'_{\rho,R}(k^2,k\lambda)=k\pi^{-1}\|J_{\rho,R}(k,\theta,\lambda)\|_{L^2(\mathbb{S}^2)}^2
\label{fz}
\end{equation}
where $k\neq 0$ is real and in the right hand side the limiting
value of $B_{\rho,R}$ as $\Im k\to +0$ is taken. This function
$J_{\rho,R}(k,\theta,\lambda)$ is continuous on
$\overline{\mathbb{C}^+}\backslash \{0\}$ as seen from the
absorption principle (\cite{rs4}, chapter 13, section 8 or
\cite{d1}). Around zero, we have an estimate
\begin{equation}
|J_{\rho,R}(k,\theta,\lambda)|<C(\rho,R)|k|^{-1}\label{estg}
\end{equation}
that can be deduced from the representation
\[
P^{-1}(k)f=R_0(k^2)f-k\lambda R_0(k^2)V_R^{(\rho)}P^{-1}(k)f\quad
k\in \mathbb{C}^+
\]
and an estimate
\[
\|P^{-1}(k)\|\leq (\Im k)^{-2}
\]
(see (37), \cite{d1}).

For large $|k|$, we have the following uniform estimate
\begin{equation}
\int_{\mathbb{S}^2} |J_{\rho,R}(k,\theta,\lambda)|^2 d\theta <C
\frac{1+|k|\Im k}{[\Im k]^4} \|f(x)\|_2 \|f(x)e^{2\Im
k|x|}\|_2\label{estg1}
\end{equation}
(take $r\to\infty$ in (48), \cite{d1}).

Now,  consider the function
\[
g(k)=\ln
\|ke^{2ik}J_{\rho,R}(k,\theta,\lambda)\|_{L^2(\mathbb{S}^2)}
\]
This function is subharmonic in $\mathbb{C}^+$ and the estimates
(\ref{estg}) and (\ref{estg1}) enable us to apply the mean-value
inequality with the reference point $k=i$.

This, along with the identity (\ref{fz}), gives
\[
\int_\mathbb{R} \frac{\ln \sigma'_{\rho,R}(k^2,k\lambda)}{k^2+1}dk>
C_1+C_2\ln \int_{\mathbb{S}^2}
|J_{\rho,R}(i,\theta,\lambda)|^2d\theta, \quad C_2>0
\]
Now, notice that the choice of $f$ guarantees that
\[
|J_{\rho,R}(i,\theta,\lambda)|\sim
|b_{R}^{(\rho)}(i,\theta,\lambda)|
\]
and (\ref{oc1}) implies that
\[
C_1 +C_2\ln \int_{\mathbb{S}^2}
|J_{\rho,R}(i,\theta,\lambda)|^2d\theta>C_3
\]
uniformly in $R$ and $\lambda\in [-c,c]$. Thus, we have an estimate
\[
\int_\mathbb{R} \frac{\ln
\sigma'_{\rho,R}(k^2,k\lambda)}{k^2+1}dk>-C
\]
uniformly in $R>\rho$ and $\lambda\in [-c,c]$ with any fixed $c$.
Now, taking any interval $(a,b)\subset (0,\infty)$, we have
\[
\int_a^b dE \int _{-c}^c \ln \sigma_{\rho,R}'(E,\lambda)d\lambda>-C
\]
uniformly in $R$ so taking $R\to\infty$ and using the lower
semicontinuity of the entropy (see \cite{ks} and \cite{d1}, p.21 and
Lemma 3.4), we get
\[
\int_a^b dE \int _{-c}^c \ln \sigma_{\rho}'(E,\lambda)d\lambda>-C
\]
The Fubini-Tonelli theorem now gives
\[
\int_a^b \ln \sigma_{\rho}'(E,\lambda)dE>-\infty
\]
for a.e. $\lambda$ so $[a,b]\subseteq
\sigma_{ac}(H^{(\rho)}_\lambda)$ for a.e. $\lambda$. As was
mentioned already, the a.c. spectrum is stable under changing the
potential on any compact set. Thus, $[a,b]\subseteq
\sigma_{ac}(H_\lambda)$ for a.e. $\lambda$. Since $[a,b]$ was taking
arbitrarily, we have statement of the theorem.
\end{proof}

{\bf Remark.} In the paper \cite{dk}, we studied the case when the
potential $V\geq 0$ and is supported on the set $E$ (a good example
to think about is a countable collection of  balls) of any geometric
structure. The special modified capacity and the harmonic measure
were introduced and studied  which allowed the effective estimation
of probabilities in the natural geometric terms. The same results,
e.g. the estimates in terms of anisotropic Hausdorff content and the
size of the spherical projection, are true in the current setting
when the potential is not assumed to be positive. The statements
however are true only generically in $\lambda$.

\bigskip\nt
{\bf Acknowledgement.} \rm We acknowledge the support by Alfred P.
Sloan Research Fellowship and the NSF grant DMS-0758239. Thanks go
to Stas Kupin and the University of Provence where part of this work
done.

\end{document}